\newtheorem{theorem}{Theorem}
\newtheorem{proposition}[theorem]{Proposition}
\newtheorem{remark}[theorem]{Remark}
\newcommand\N{{\mathbb N}}
\newcommand\R{{\mathbb R}}
\newcommand\E{{\mathbb E}}
\newcommand{\Var}{{\rm Var}}
\newcommand{\Span}{{\rm span}}
\begin{document}
\title{Conditions for the finiteness of the moments of the  volume of level sets.}
\author{D. Armentano\thanks{CMAT, 
Universidad de la Rep\'{u}blica, Montevideo, Uruguay. 
E-mail: diego@cmat.edu.uy.}
\qquad
J-M. Aza\"{i}s\thanks{IMT, Universit\'{e} de Toulouse, 
Toulouse, France. Email: jean-marc.azais@math.univ-toulouse.fr}
\qquad
David Ginsbourger \thanks{Idiap Research Institute, Martigny, Switzerland, and Institute of Mathematical Statistics and Actuarial Science, University of Bern, Bern, Switzerland. Emails: ginsbourger@idiap.ch; ginsbourger@stat.unibe.ch.}
\qquad
J. R. Le\'{o}n\thanks{IMERL, 
Universidad de la Rep\'{u}blica, Montevideo, Uruguay.
E-mail: rlramos@fing.edu.uy and Universidad Central de Venezuela. Escuela de Matem\'atica.}}

\maketitle
\makeatother
\abstract{Let   $X(t)$   be a Gaussian random field  $\R^d\to\R$.
Using the notion of $(d-1)$-\emph{integral geometric
measures},
we establish a relation between  (a) the volume  of the level set (b) the number of crossings  of the restriction of the random field to a line. 
Using this relation  we prove the equivalence between the finiteness of the expectation  and the finiteness of the second spectral moment matrix. Sufficient conditions for  finiteness of higher moments are also established.
}

\section{Introduction}

 Let $X(t)$  be a centered, stationary, Gaussian random field
$$X:\Omega\times \R^d\to\R,$$
with continuous sample paths.
By a scaling argument, and without loss of generality, we may assume that  $X(t)$ is centered with variance 1. 
On the other hand, for a given $u\in \R$,  let us consider the level set restricted to some compact set $K \subset \R^d$ 
$$ 
C_{u,K} := \{t \in K\, : X(t)=u\}.
$$
If the sample paths of $X(t)$  are almost surely  (a.s.) differentiable and   if  a. s.
 there exist no point $t$ such that $X(t) =u, \nabla X(t) =0$ 
 (where $\nabla X(t)$ is $X$'s gradient), then by the implicit function theorem, $C_{u,K} $ is almost surely a manifold  and its $(d-1)$-volume is well defined  and coincides with its $(d-1)$-Hausdorff measure, namely, $\mathcal H_{d-1}(C_{u,K})$. Under some non-degeneracy hypothesis, the Kac-Rice formula  (KRF) Aza\"is-Wschebor\cite{AW}) gives an expression  for the moments  of this measure.  If we consider the expectation, the compactness  of the set $K$ and the KRF imply that the first moment is finite, so we already have a sufficient condition of finiteness (but as we will see, the latter is not necessary). For higher moments  the  KRF provides a multiple integral, the integrand of which is degenerate on the diagonal so the study of finiteness is not straightforward. 
 
 When $d=1$,  $C_{u,K}$ is a.s. a set of points and its measure is just the number of points.  We have several result on finiteness of moments, see Sections \ref{section2} and  \ref{section3}. They all use at some stage the Intermediate Value Theorem. Unfortunately these methods are completely  inoperative in  higher dimensions. 
 Here, we appeal to integral geometry in order to establish dimension-independent necessary and sufficient conditions of almost sure finiteness of level set volumes that boil down to one-dimensional results.

 In Section \ref{sec:IGM} we recall the definition of the $(d-1)$-dimensional \emph{integral-geometric measure}, which is defined  as the integral of the number of points over a family of lines.
 
 Our three  main results follow 
 \begin{itemize}
 \item In Section \ref{section2} we establish the equivalence  between (a) the finiteness of the expectation  of the   $(d-1)$-dimensional
 \emph{integral-geometric measure} of the level set and (b) the finiteness of the second spectral moment matrix. This result gives a simpler presentation and shorter proof of the results  of Wschebor  \cite{W} which  uses De Gorgi perimeters.
 
\item In Section \ref{section3} we give sufficient conditions for finiteness of the second moment  (Theorem \ref{C1})  using the Geman condition (See \cite{Ge}).
 
 \item In the same section,  we prove finiteness all moments (Theorem \ref{t:3}), under some conditions, when the sample paths are smooth. 
 \end {itemize}

\section{Integral geometric measure, Crofton formula}\label{sec:IGM}

Let $B$ be a Borel set in $\R^d$. 
Following Morgan\cite{Morgan} (and also Federer \cite{Fe}) we define the $(d-1)$-\emph{integral geometric measure} of $B$
 by
\begin{equation}\label{eq:igm}
  \mathcal{I}_{d-1}(B):=c_{d-1}\int_{v\in S^{d-1}} \left(\int_{y\in v^\perp} \#
  \left\{ B\cap \ell_{v,y} \right\} \, d\mathcal{H}_{d-1}(y)\right)\,d
  S^{d-1}(v)
\end{equation}
where $S^{d-1}$ is the unit sphere in $\R^d$ with its induced Riemannian
measure, and $\ell_{v,y}$ is the affine linear space $\{y+tv:\,
t\in\R\}$. The constant can be easily computed, using the Crofton formula below and considering the particular case of the sphere, yielding, 
$$
c_{d-1}=\frac{\Gamma\left(\frac{d+1}{2}\right)}{2\pi^{(d-1)/2}}.
$$
The integrand  in \eqref{eq:igm} is measurable
 (see for example Morgan\cite[page 13]{Morgan}), and since it is non-negative, the integral is always well defined, finite or infinite.

In particular, if $B$ is $(d-1)$-rectifiable, then Crofton's formula \cite{Morgan} p.  31 yields
\begin{equation}\label{Crofton} 
\mathcal H_{d-1}(B)= \mathcal{I}_{d-1}(B),
\end{equation}
where $\mathcal H_{d-1}$ is the $(d-1)$-Hausdorff measure.


\section{Characterisation for the finiteness of the expected volume of
the level set.}\label{section2}

The spectral measure $ F$  of $X(\cdot)$ is a symmetric  measure with  mass one: it is a probability measure.

Let $\Lambda _2$ be the second spectral moment matrix  defined by
$$
(\Lambda _2)_{ij} := \int_{\R^d} \lambda_i \lambda_j dF(\lambda).
$$
This matrix may be finite or infinite, infinite meaning by convention that at least one entry is infinite.

When $\Lambda _2$ is finite, it is easy to  prove that $X(\cdot)$ is differentiable in quadratic mean. If in addition 
 the sample paths are almost surely differentiable (which is a little
 stronger) and  if  a.s.
there exist no point $t$ such that $X(t) =u, \nabla X(t) =0$, we have:
 
 \begin{itemize}
   \item the level set $C_{u,K}$ is almost surely a submanifold of codimension $1$,
   and its Riemannian volume  can be defined and
     coincides with its $(d-1)$-Hausdorff measure, namely, $\mathcal H_{d-1}(C_{u,K})$; 
\item the KRF (see  Adler-Taylor \cite{at} or Aza\"is-Wschebor\cite{AW}) implies that 
  \begin{eqnarray} \label{rice}
  \nonumber\E(\mathcal H_{d-1}(C_{u,K}))&=&  \E( \mathcal I_{d-1}(C_{u,K}))= \mathcal{L}_d(K)  \E (\|X'(0)\|)  \frac{
      e^{-u^2/2 }}{\sqrt{2\pi}}\\
&=&  \mathcal{L}_d(K) \mathcal F (\Lambda_2)  e^{-u^2/2 },
  \end{eqnarray}
     where $ \mathcal I_{d-1}(C_{u,K})$ is the the $(d-1)$-dimensional
 \emph{integral-geometric measure}
 defined above,  $\mathcal{L}_d$   is the Lebesgue measure on $\R^d$ and
 \begin{equation}\label{e:f}
   \mathcal F (\Lambda_2)  : =   \frac{1} {(2\pi)^{(d+1)/2}} \int_{z\in\R^d}
   (z^\top \Lambda_2 z)^{1/2}
   e^{-\|z\|^2/2} d\mathcal{L}_d(z).
   \end{equation}
The second equality  in \eqref{rice} is the   true Kac-Rice formula, the third  is due to classical integration.
  \end{itemize} 
  
We need to extend  the definition
 of $  \mathcal F (\Lambda_2) $ by setting it to $+\infty$ when
 $\Lambda_2$ is infinite.  
 
  So we consider the following relation: 
 \begin{equation} \label{rice2}
   \E( \mathcal I_{d-1}(C_{u,K}) )=\mathcal{L}_d(K)\mathcal F (\Lambda_2)  e^{-u^2/2 }.
  \end{equation}
 Note that its terms on both hand sides  are now always  well defined, finite or infinite. \bigskip
 
 The goal  of this section  is  to prove that in a broad sense  this formula is always true  : 
   \begin{itemize}
  \item Whenever $\Lambda_2$ is finite, of course the RHS of \eqref{rice} is finite, but  also the LHS is finite also and equality holds true.
   \item  If $\Lambda_2$ is infinite then both sides  of \eqref{rice} are infinite. 
  \end{itemize}
  
   Such a kind of property   is known since the work  of Cram\'er-Leadbetter \cite{CL} for $d=1$ and from the  work of Wschebor \cite{W} for $d>1$. Our proof  uses  Cramer-Leadbetter's result and  generalised Crofton's formula.

\noindent We first  recall   a result due to Cram\'er-Leadbetter, main result of Section 10.3 of \cite{CL}.\\
The expected number of crossings $N_u([0,T]$ of a stationary processes with any level $u$  on an interval $[0,T]$ is finite if and only if $\lambda_2<\infty$, where $\lambda_2$ denotes the second spectral moment.\\
In case $\lambda_2$ is finite we have furthermore 
$$
\E(N_u([0,T]) =  \frac {T }{\pi} \sqrt{\lambda_2} e^{-u^2/2}.
$$

This result is based on polygonal approximation  and intermediate values theorem, so it heavily  relies  on one-dimensional settings.

 We  now turn  to our first  main result.

\begin{theorem}\label{theo1}
Let $X(t)$ be a centered, stationary random field $X:\R^d\to\R,$
with continuous sample paths. Then, we have equivalence between:
\begin{itemize}
  \item $\E(\mathcal I_{d-1}(C_{u,K}))<\infty$,
\item  The second spectral moment  matrix  $\Lambda_2$ is finite.
\end{itemize}
In such a case we have 
$$
  \E(\mathcal I_{d-1}(C_{u,K})) = \mathcal{L}_d(K)\mathcal F (\Lambda_2)  e^{-u^2/2 }
$$

\end{theorem}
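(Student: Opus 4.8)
The plan is to start from the Crofton-type representation \eqref{eq:igm} of $\mathcal{I}_{d-1}(C_{u,K})$ and, since the integrand is nonnegative and jointly measurable in $(\omega,v,y)$, to apply Tonelli's theorem to bring the expectation inside both integrals:
$$\E(\mathcal{I}_{d-1}(C_{u,K})) = c_{d-1}\int_{v\in S^{d-1}}\int_{y\in v^\perp} \E\big[\#\{C_{u,K}\cap \ell_{v,y}\}\big]\, d\mathcal{H}_{d-1}(y)\, dS^{d-1}(v).$$
The crucial observation is that, for fixed $v$ and $y$, the count $\#\{C_{u,K}\cap\ell_{v,y}\}$ is the number of solutions of $X(y+sv)=u$ with $y+sv\in K$, i.e. the number of $u$-level points of the one-dimensional process $s\mapsto X(y+sv)$ on the set $\{s:\,y+sv\in K\}$. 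This directional process is centered, stationary, Gaussian with variance $1$, and its second spectral moment is $\lambda_2(v)=v^\top\Lambda_2 v=\int_{\R^d}(\lambda^\top v)^2\,dF(\lambda)$, obtained by pushing forward $F$ under $\lambda\mapsto\lambda^\top v$. This reduces the whole statement to the Cram\'er-Leadbetter dichotomy recalled above.

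For the implication $\E(\mathcal{I}_{d-1})<\infty\Rightarrow\Lambda_2$ finite, I would argue as follows. The number of $u$-level points on a segment dominates the number of $u$-crossings, so $\E[\#\{C_{u,K}\cap\ell_{v,y}\}]\ge\E[N_u]$. If $\lambda_2(v)=\infty$ then by Cram\'er-Leadbetter $\E[N_u]=\infty$ on every segment of positive length; since $\int_{v^\perp}\mathcal{H}_1(\ell_{v,y}\cap K)\,d\mathcal{H}_{d-1}(y)=\mathcal{L}_d(K)>0$, for each such $v$ the inner integral diverges, and if the set of directions with $\lambda_2(v)=\infty$ had positive measure the whole double integral would be infinite. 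Hence finiteness of $\E(\mathcal{I}_{d-1})$ forces $v^\top\Lambda_2 v<\infty$ for $\mathcal{H}_{d-1}$-a.e. $v\in S^{d-1}$. Now comes the key elementary remark: the set $V=\{w\in\R^d:\int(\lambda^\top w)^2\,dF(\lambda)<\infty\}$ is a linear subspace, being stable under scaling and, via $(\lambda^\top(w_1+w_2))^2\le 2(\lambda^\top w_1)^2+2(\lambda^\top w_2)^2$, under addition. Since $V$ contains a positive-measure subset of $S^{d-1}$, and a proper subspace meets $S^{d-1}$ in a null set, necessarily $V=\R^d$; taking $w=e_i$ gives $(\Lambda_2)_{ii}=\int\lambda_i^2\,dF<\infty$ for every $i$, and the off-diagonal entries are finite by Cauchy-Schwarz, so $\Lambda_2$ is finite.

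For the converse and the explicit formula, assume $\Lambda_2$ finite, so $\lambda_2(v)<\infty$ for every $v$. Then the $u$-level set of the directional Gaussian process is a.s. finite with no tangencies, so $\#\{C_{u,K}\cap\ell_{v,y}\}$ agrees a.s. with the crossing number, and Cram\'er-Leadbetter together with stationarity (constant expected crossing density along the line) gives $\E[\#\{C_{u,K}\cap\ell_{v,y}\}]=\frac{1}{\pi}\mathcal{H}_1(\ell_{v,y}\cap K)\sqrt{v^\top\Lambda_2 v}\,e^{-u^2/2}$. Integrating in $y$ via $\int_{v^\perp}\mathcal{H}_1(\ell_{v,y}\cap K)\,d\mathcal{H}_{d-1}(y)=\mathcal{L}_d(K)$ and then in $v$ yields
$$\E(\mathcal{I}_{d-1}(C_{u,K}))=\frac{c_{d-1}}{\pi}\,\mathcal{L}_d(K)\,e^{-u^2/2}\int_{S^{d-1}}(v^\top\Lambda_2 v)^{1/2}\,dS^{d-1}(v).$$
It then remains to check that passing to polar coordinates in \eqref{e:f}, with the radial integral $\int_0^\infty\rho^d e^{-\rho^2/2}\,d\rho=2^{(d-1)/2}\Gamma(\tfrac{d+1}{2})$, reproduces exactly the constant $c_{d-1}/\pi$, so that the right-hand side equals $\mathcal{L}_d(K)\,\mathcal{F}(\Lambda_2)\,e^{-u^2/2}$.

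I expect the main obstacle to be the reverse implication: converting the pointwise (a.e. in $v$) finiteness of the directional second moment into global finiteness of the matrix $\Lambda_2$. The subspace argument above is what makes this work, and one must verify that the divergence of the double integral when $\lambda_2(v)=\infty$ is genuinely triggered on a positive-measure set of directions. The Tonelli step, the identification of $\lambda_2(v)$ with $v^\top\Lambda_2 v$, and the computation of the normalizing constant are routine by comparison.
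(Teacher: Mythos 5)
Your proposal is correct and follows essentially the same route as the paper: Tonelli applied to the Crofton representation, the Cram\'er--Leadbetter dichotomy for the directional processes $s\mapsto X(y+sv)$ with $\lambda_2(v)=v^\top\Lambda_2 v$, the observation that the set of directions with finite directional second moment is a linear subspace (the paper states it for $G(\Lambda_2)$ and argues via a maximal independent family, you verify closure under addition explicitly and argue in contrapositive form), and the polar-coordinate identification of the constant with $\mathcal F(\Lambda_2)$. The only differences are presentational, so there is nothing substantive to add.
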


\begin{proof}
  Since $X$ is almost surely continuous, then $C_{u,K}$ is a Borel set on
  $\R^d$ a.s., and therefore its integral geometric measure is well
  defined.
  By Fubini theorem  we get that
$$
  \E(\mathcal{I}_{d-1}(C_{u,K}) )=c_{d-1}\int_{v\in S^{d-1}} \left(\int_{y\in
  v^\perp} 
  \E( \#   \left\{ C_{u,K}\cap \ell_{v,y} \right\}) 
  \, d\mathcal{H}_{d-1}(y)\right)\,dS^{d-1}(v)
  $$
As a matter of fact, because of stationarity of the process and by
  Cram\'er-Leadbetter applied to the process $t\mapsto X(y+tv)$,  
 we get
 $$
  \E( \#   \left\{ C_{u,K}\cap \ell_{v,y} \right\})=
  \mathcal{H}_1(K\cap \ell_{v,y})
  \sqrt{v^\top \Lambda_2 v} \, \frac{1}{\pi}e^{-u^2/2}. 
$$
Then, $\E(\mathcal{I}_{d-1}(C_{u,K}))$ is equal to
 \begin{equation*}
      e^{-u^2/2}\frac{c_{d-1}}{\pi}\cdot 
    \int_{v\in S^{d-1}}  \sqrt{v^\top \Lambda_2 v}\,
    \left( 
  \int_{y\in v^\perp}\mathcal{H}_1(K\cap
  \ell_{v,y})\,d\mathcal{H}_{d-1}(y)\right)\,dS^{d-1}(v)
 \end{equation*} 
and by Fubini, we obtain that 
\begin{equation}\label{e:1}
  \E(\mathcal{I}_{d-1}(C_{u,K}))
=\mathcal{L}_{d}(K)
e^{-u^2/2}\frac{c_{d-1}}{\pi}     
  \int_{S^{d-1}}  \sqrt{v^\top \Lambda_2 v}\, dS^{d-1}(v).
    \end{equation}

  Integrating in  polar coordinates the expression
  $\mathcal{F}(\Lambda_2)$, given in (\ref{e:f}), we obtain
  \begin{align*}
    \mathcal{F}(\Lambda_2) &= 
    \frac{1} {(2\pi)^{(d+1)/2}} \int_{0}^{+\infty}\rho^d
    e^{-\rho^2/2}\,d\rho\,\int_{v\in S^{d-1}} 
    (v^\top \Lambda_2 v)^{1/2} dS^{d-1}(v).
  \end{align*}
Furthermore, making the change of variable $u=\rho^2/2$, it is
  straighforward to conclude that
  \begin{equation}\label{eq:flambda}
    \mathcal{F}(\Lambda_2) =\frac{c_{d-1}}{\pi}\int_{S^{d-1}}
    \sqrt{v^\top \Lambda_2 v}\, dS^{d-1}(v), 
\end{equation}
  and therefore from (\ref{e:1}) yields
  \begin{equation}\label{eq:main}
   \E(\mathcal{I}_{d-1}(C_{u,K}))
=\mathcal{L}_{d}(K)
e^{-u^2/2} \mathcal{F}(\Lambda_2).
\end{equation}

We consider the two following cases.

\begin{itemize}
\item  When $\Lambda_2$ is finite  the integral  on the RHS of
\eqref{eq:main} is finite and therefore we get the desired result in this
    case.
 \item  
 When $\Lambda_2$ is infinite,  this means that   this matrix has at least  one infinite element. In such a case we define the  linear subspace 
$$
G(\Lambda_2) := \{ v \in 
\R^{d} : v^\top \Lambda_2 v < +\infty\}.
$$
We prove  that $G(\Lambda_2) $ is    of dimension strictly
  smaller than $d$.
Let   $ v_1 , \ldots, v_{d_0}$ be a maximal set of linearly independent vectors of  $G(\Lambda_2)$.
    Then by standard linear algebra:
\begin{itemize}
\item the space $\Span(  v_1 , \ldots, v_{d_0})$ generated by $ v_1 ,
  \ldots, v_{d_0}$ is in  $G(\Lambda_2) $.  This implies that $d_0<d$,
 \item  for every $v \notin \Span(  v_1 , \ldots, v_{d_0})$: $  v^\top
   \Lambda_2 v =+ \infty$ (unless $ v_1 , \ldots, v_{d_0}$ is not maximal) ,
 \item this implies that   $G(\Lambda_2)  =  \Span(  v_1 , \ldots, v_{d_0})$. 
 \end{itemize}
In conclusion the integrand  in \eqref{eq:main} is almost everywhere infinite so the integral is infinite and  by consequence  the expectation of the integral geometric measure 
  is infinite.
  \end{itemize}

\end{proof}
\section{Finitness of $k$-moments of the volume of the level set}\label{section3}

Using Formula (\ref{eq:igm}) it is possible to obtain sufficient conditions under which  the random variable $\mathcal{I}_{d-1}(B)$ has finite moments. To illustratethis we will first consider the second moment. Thus we have the following

\begin{theorem} \label{C1}Let assume that
\begin{itemize}
\item The second spectral moment  matrix  $\Lambda_2$ is non-degenerate. 
 \item There exists $\delta >0$ such that the spectral mesure $F$ satisfies
$$\int_{\R^d}||\lambda ||^{2+\delta}dF(\lambda)<\infty.$$
\end{itemize}
Then we have
$$\displaystyle
  \E(\mathcal{I}_{d-1}(C_{u,K}) )^2<\infty.$$
\end{theorem}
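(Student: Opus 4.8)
The plan is to expand the square of the representation \eqref{eq:igm} into a double integral over pairs of lines and then \emph{decouple} the two lines by the Cauchy--Schwarz inequality, so that the whole problem collapses to a \emph{uniform} one-dimensional second-moment estimate. Writing $N_{v,y}:=\#\{C_{u,K}\cap\ell_{v,y}\}$ and $d\mu(v,y):=d\mathcal H_{d-1}(y)\,dS^{d-1}(v)$, Tonelli's theorem gives
\begin{equation*}
\E\big(\mathcal I_{d-1}(C_{u,K})^2\big)=c_{d-1}^{2}\int\!\!\int \E\big(N_{v,y}\,N_{v',y'}\big)\,d\mu(v,y)\,d\mu(v',y').
\end{equation*}
Since $\E\big(N_{v,y}N_{v',y'}\big)\le\big(\E N_{v,y}^{2}\big)^{1/2}\big(\E N_{v',y'}^{2}\big)^{1/2}$, the right-hand side factorises and it suffices to prove that
\begin{equation*}
\int \big(\E N_{v,y}^{2}\big)^{1/2}\,d\mu(v,y)<\infty,
\end{equation*}
where only lines meeting $K$ contribute, the others giving $N_{v,y}=0$.

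Next I would fix a line and pass to one dimension. For $v\in S^{d-1}$ and $y\in v^\perp$ put $Z_{v,y}(t):=X(y+tv)$; this is a centered stationary Gaussian process whose covariance $r_v(\tau)=r(\tau v)$ (with $r$ the covariance of $X$) depends on $v$ alone, and whose second spectral moment is $\int_{\R^d}\langle\lambda,v\rangle^{2}\,dF(\lambda)=v^\top\Lambda_2 v$. Because $y\perp v$, every $t$ with $y+tv\in K$ obeys $|t|\le\rho$, where $\rho$ is the radius of a ball containing the compact set $K$; hence $N_{v,y}$ is bounded by the number $N^{(v)}$ of $u$-crossings of $Z_{v,y}$ on the \emph{fixed} interval $[-\rho,\rho]$, whose law depends only on $v$. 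It therefore remains to bound $\E\big((N^{(v)})^{2}\big)$ uniformly in $v\in S^{d-1}$.

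This uniform one-dimensional estimate is the heart of the argument and the place where both hypotheses enter. Non-degeneracy of $\Lambda_2$ gives $v^\top\Lambda_2 v\ge\lambda_{\min}>0$ for all $v$, so each $Z_{v,y}$ is genuinely differentiable in quadratic mean with derivative variance bounded below. For the regularity at the origin I would write
\begin{equation*}
r_v''(\tau)-r_v''(0)=\int_{\R^d}\langle\lambda,v\rangle^{2}\big(1-\cos(\tau\langle\lambda,v\rangle)\big)\,dF(\lambda)
\end{equation*}
and combine it with the elementary inequality $1-\cos x\le c_\delta|x|^{\delta}$ (valid for $0<\delta\le2$, and we may clearly reduce to this case by interpolation of moments) to obtain
\begin{equation*}
\big|r_v''(\tau)-r_v''(0)\big|\le c_\delta\,|\tau|^{\delta}\int_{\R^d}\|\lambda\|^{2+\delta}\,dF(\lambda).
\end{equation*}
Thus the moment hypothesis makes $r_v''$ Hölder-$\delta$ continuous at $0$ with a constant \emph{independent of} $v$, so that the Geman integral $\int_0^{\eta}\tau^{-1}\big(r_v''(\tau)-r_v''(0)\big)\,d\tau$ is finite and uniformly bounded in $v$. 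By Geman's criterion \cite{Ge} this controls the second factorial moment of the crossings, and hence $\E\big((N^{(v)})^{2}\big)$, of each $Z_{v,y}$ on $[-\rho,\rho]$. The main obstacle is precisely to carry this uniformity through the Kac--Rice expression for the second factorial moment: near the diagonal $s=t$ the Gaussian pair $(Z_{v,y}(s),Z_{v,y}(t))$ degenerates and both the joint density and the conditional expectation of $|Z_{v,y}'(s)|\,|Z_{v,y}'(t)|$ must be estimated, the Hölder modulus above being exactly what tames this singularity simultaneously for all $v$.

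Granting the resulting bound $\sup_{v\in S^{d-1}}\E\big((N^{(v)})^{2}\big)\le M<\infty$, the conclusion follows at once. Indeed $\E N_{v,y}^{2}\le M$ for every line meeting $K$, while the $\mu$-measure of the lines meeting $K$ equals $\int_{S^{d-1}}\mathcal H_{d-1}\big(\pi_{v^\perp}(K)\big)\,dS^{d-1}(v)$ and is finite, since each projection $\pi_{v^\perp}(K)$ lies in a $(d-1)$-dimensional ball of radius $\rho$. Consequently $\int\big(\E N_{v,y}^{2}\big)^{1/2}\,d\mu\le M^{1/2}\,\mu(\{\text{lines meeting }K\})<\infty$, and the Cauchy--Schwarz factorisation of the first paragraph yields $\E\big(\mathcal I_{d-1}(C_{u,K})^2\big)<\infty$.
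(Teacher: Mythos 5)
Your reduction is sound and is, in outline, the same as the paper's: you expand the square of the Crofton-type representation, decouple the two lines (you via Cauchy--Schwarz over pairs of lines, the paper via Jensen's inequality over the bounded set of lines meeting $K$ --- the two devices are interchangeable here), note that only lines meeting $K$ contribute and that this set of lines has finite measure, use stationarity to dominate the count on any such line by the count on a fixed centred interval, and thereby reduce everything to one claim: $\sup_{v\in S^{d-1}}\E\big((N^{(v)})^2\big)<\infty$. You have also correctly identified the two uniform ingredients the hypotheses provide: $v^\top\Lambda_2 v\ge\lambda_{\min}>0$, and the H\"older bound $|r_v''(\tau)-r_v''(0)|\le c_\delta|\tau|^{\delta}\int_{\R^d}\|\lambda\|^{2+\delta}dF(\lambda)$. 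The latter is the paper's key estimate in disguise: the paper's function $\theta_v$ satisfies $\theta_v''(\tau)=r_v''(\tau)-r_v''(0)$, and its bound $|\theta_v'(\tau)/\tau^2|\le \mathcal C_\delta\,\tau^{\delta-1}\int\lambda^{2+\delta}dF_v(\lambda)$ is the integrated form of yours.

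The gap is the step you label ``Granting the resulting bound'': that uniform bound \emph{is} the theorem, and invoking Geman's criterion does not deliver it. First, Geman's 1972 result concerns the zero level of a single fixed process, and it is qualitative --- it asserts finiteness of the second moment when the Geman integral converges, not a bound of the form $\E(N^2)\le\Phi(\text{Geman integral},\lambda_2,\dots)$; hence neither the extension to an arbitrary level $u$ nor the uniformity over the family $\{Z_{v,\cdot}\}_{v\in S^{d-1}}$ can be read off the statement, only extracted from its proof with all constants tracked. Second, taming the Kac--Rice integrand near the diagonal needs more than the H\"older modulus of $r_v''$: one must prove a lower bound $1-r_v^2(\tau)\ge \mathbf C\,\tau^2$ uniform in $v$ on a fixed interval (it does follow from your two ingredients, e.g.\ by Taylor expansion $1-r_v(\tau)\ge\tau^2\big(\lambda_{\min}/2-\mathbf C\,\tau^{\delta}\big)$, but you never state it), and one must control the conditional moments of $\big(Z'(0),Z'(\tau)\big)$ given $Z(0)=Z(\tau)=u$ through the regression formulas. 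The paper's proof consists almost entirely of exactly this work: Rolle's theorem to pass to up-crossings, the Kac--Rice formula for the second factorial moment, the regression formulas, the inequality $z^+t^+\le (z+t)^2/4$, the uniform lower bound on $1-r_v^2$, and the final bound of the integrand by $\mathbf C\,\tau^{\delta-1}$, which is integrable at $0$. A further point you gloss over: to apply Kac--Rice (or Rolle) along lines one needs a.s.\ $C^1$ sample paths, which under the $(2+\delta)$-moment hypothesis comes from the Kolmogorov--Chentsov criterion, not from the quadratic-mean differentiability you invoke. In short, your outline points at all the right objects, but the uniform one-dimensional second-moment estimate that constitutes the proof is asserted rather than proved.
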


%
\begin{remark}
Let us point out that under the assumption that $\int_{\R^d}||\lambda||^{2+\delta}dF(\lambda)$ is finite,  the Kolmogorov-Chentsov criterion implies that the field $X$ has a.s. $ \mathcal C^1$ sample paths. 
Thus the following equality takes place \\ $\displaystyle
  \E(\mathcal{I}_{d-1}(C_{u,K}) )^2= \E(\mathcal H_{d-1}(C_{u,K}))^2.$ Moreover, the Riemannian volume  of $C_{u,K}$ can be defined and coincides with its $(d-1)$-Hausdorff measure.
\end{remark}
\begin{proof} Without loss of generality we assume that $ \delta <2$. Let  $r$ be the covariance function of $X$, and let us first consider the field restricted to the line  $\ell_{y,v}: y+tv, t\in \R$: $\tilde X_{y,v}(t)=X(y+tv)$. Its covariance function is given by
$$\ r_{v}(t)=\E[X(y+tv)X(y)]=r(tv).$$ 
Note that because of stationarity  it does not depend on $y$.

It is sufficient to prove the assertion of the theorem for  a set $K$  being a  centred ball  $B_a$ with sufficiently small diameter $a$.  In that case  note that the integral in the right-hand side of  \eqref{eq:igm} is finite since for $ |y |>a$ the integrand vanishes. 
Since the  second spectral moment  matrix is finite and non degenerate 
$$ \int_{\R^d}  \langle \lambda, v \rangle ^2 dF(\lambda),
$$
is bounded below and above.  On the other hand, using a monotone convergence argument, as $b$ tends to infinity 

\begin{equation} \label{jma:e0}
\int_{\R^d \setminus B_b}  \langle \lambda, v \rangle ^2 dF(\lambda)  \leq  \int_{\R^d \setminus B_b} || \lambda || ^2 dF(\lambda) \to 0.
\end{equation}
So it is easy to conclude that  for $b$ sufficiently large,  for any $v  \in S^{d-1}$
\begin{equation} \label{jma:e1}
\int_{B_b}  \langle \lambda, v \rangle ^2 dF(\lambda) >1/2 \int_{\R^d}  \langle \lambda, v \rangle ^2 dF(\lambda).
\end{equation}

In the rest of the paper $\mathbf C$ will denote  some  unimportant constant, its value may change from an occurence to another.

Applying the Jensen inequality (with respect to the integral) yields 
\begin{multline*}
  \E(\mathcal{I}_{d-1}(C_{u,K}) )^2
  \\
\le\mathbf C\,c^2_{d-1}\int_{v\in S^{d-1}}\int_{y\in
  v^\perp} 
  \E( \#   \{ C_{u,K}\cap \ell_{y,v} \})^2
  \, d\mathcal{H}_{d-1}(y)\,dS^{d-1}(v).
 \end{multline*}
  As already remarked,   the integral is over a bounded domain and 
 it is sufficient to prove  that  the integrand  is uniformly bounded. 
 
 Remark also that $  B_b\cap \ell_{y,v} $ is always a centred interval with length $2c$ less that $2a$.  Consequently,
 $$
 \E( \#   \{ C_{u,K}\cap \ell_{y,v} \})^2 \leq \E( \#   \{ C_{u,K}\cap \ell_{0,v} \})^2.
$$
It remains to prove that 
 $$
  \E( \#   \{ C_{u,K}\cap \ell_{0,v} \})^2
  $$ is uniformly bounded.
  
  In fact, because of the Rolle theorem, if  $ U_u$ is the number of up-crossings of the level $u$  on the line $\ell_{0,v}$, then
  $$
  \#   \{ C_{u,K}\cap \ell_{0,v} \} \leq 2U_u +1.
  $$
  So it is sufficient  to bound the second moment of $U_u$ and even, because we have proved in the previous section that the first moment is uniformly bounded, it is sufficient to bound the second  factorial moment. Since the variance has been assumed to be 1 and $\Lambda_2$ is non-degenerate, the Kac-Rice formula applies and yields 
  \begin{align*}
  & \E(U_u(U_u-1)) 
  \\&=  \int_{-a} ^a \int_{-a} ^a \E \big(X'^+(s) X'^+(t) \big| X(s) =X(t) =u \big) \frac {1}{2 \pi }  \frac {1}{\sqrt{1-r_v^2(s-t)}} ds dt
  \\
  & \leq \mathbf C  \int_0 ^{2a} (2a -\tau ) \E \big(X'^+(0) X'^+(\tau) \big| X(0) =X(\tau) =u \big)  \frac {1}{\sqrt{1-r_v^2(\tau)}}d\tau,
  \end{align*}
   where $X$  stands for $\tilde X_{0,v}$.

   By a standard regression formula, see for example  \cite{AW} page 99, 
   $$
   \E(X'(0) \big|X(0) =X(\tau) =u) = -\E(X'(\tau) \big|X(0) =X(\tau) =u)  = \frac{-r_v'(\tau) u}{1+r_v(\tau)}.
   $$
   Also, 
   \begin{align*}
   \sigma_v^2(\tau) :&= \Var(X'(0) | X(0) = X(\tau) =u ) \\
   &=\Var(X'(\tau)  | X(0) = X(\tau) =u )= \frac{ \lambda_{2,v}(1-r_v(\tau)) - r_v'^2(\tau)}{1-r_v^2(\tau)}.
   \end{align*}
  Set $\theta_v(\tau )  := r_v(\tau)-1+\lambda_{2,v} \tau^2/2$
  using the inequality $ z^+t^+ \leq (z+t)^2/4$  and  the fact that $ \theta_v(\tau )$, $ \theta_v'(\tau )$, $ \theta_v''(\tau )$ are non-negative  we get 
  $$
   \E(U_u(U_u-1))  \leq \mathbf C a \int _0 ^{2a}  2 \lambda_{2,v} \tau   \theta_v'(\tau)\big(1-r_v^2(\tau) \big)^{-3/2}.
  $$
 Now, there exists a constant $  \mathbf {C_0 }$  such that
 $$ 
 0<w<1 \mbox{ implies  that }  1- \cos (w)  \geq  \mathbf {C_0 } w^2. 
$$ 
This implies in turn that  for $\tau <1/b$ where $b$ has been defined in  \eqref{jma:e1}
   \begin{align*}
1-r_v(\tau)  &= \int _0 ^{+\infty}  1-\cos(\lambda \tau) \ dF_v(\lambda) \\
&\geq  \int _0 ^{1/\tau} 1-\cos(\lambda \tau) dF_v(\lambda) \\
&\geq \mathbf {C_0 }  \tau^2  \int _0 ^{1/\tau} \lambda^2  dF_v(\lambda) \geq \frac 1 2 
 \mathbf {C_0 }  \tau^2  \int _0 ^{+ \infty} \lambda^2  dF_v(\lambda)
 \geq \mathbf {C } \tau^2,
    \end{align*}
 where $F_v$ is the spectral measure  along the line $ \ell_{0,v}$ (for convenience it is on $(0,+\infty))$. The penultimate equality uses \eqref{jma:e1}, the last inequality  is due to the fact that 
 $\Lambda_2$ is non-degenerate.
 \medskip
 
On the other hand  it is direct to prove that $1-r_v(\tau) \leq \lambda_{2,v}\tau^2$  and, by compactness, the quantity $\lambda_{2,v}$  is bounded as a function of $v$ giving that $1 + r_v(\tau) \geq 1$ as soon as the radius $a$ of the ball is sufficiently small.  This yields 
$$
1-r_v^2(\tau)  \geq \mathbf {C } \tau^2 .$$
%
As a consequence
\begin{equation}\label{e:geman}
   \E(U_u(U_u-1))  \leq \mathbf C \lambda_{2,v} a \int _0 ^{2a}  2   \frac{\theta_v'(\tau)}{\tau^2} d\tau.
   \end{equation}
 The integrand in \eqref{e:geman} can be bounded because
 $$
  \int_0 ^{\infty} \lambda ^{2+\delta}dF_v(\lambda)   \leq  I(\delta) := \int_{\R^d}||\lambda ||^{2+\delta}dF(\lambda) <\infty.
 $$
 We have 
 $$
 \frac{\theta_v'(\tau)}{\tau^2}  = \tau^{-2} \int_0 ^{\infty}  (\tau \lambda_2 - \lambda \sin(\lambda \tau) )dF_v(\lambda)  .
 $$
 Define 
 $$
 R(u):= (u-\sin(u)).
 $$
 Its behaviour  at zero and at infinity implies that for every $\delta$, $ 0<\delta<2$, there exist a constant $\mathcal C_\delta$  such that 
 $$
 0 \leq  R(u) \leq \mathcal C_\delta u^{1+\delta}.
 $$
 This implies that 
 \begin{align*}
  \Big|  \frac{\theta'_v(\tau)}   {\tau^2} \Big|  
  & \leq \tau^{-2} \int_0 ^{\infty}   \lambda | R( \lambda \tau) |  dF_v(\lambda)  \\
 &    \leq  \mathcal C_\delta    \tau^{-2} \int_0 ^{\infty}   \lambda  (\lambda \tau)^{1+\delta} dF_v(\lambda)\\
 & \leq  \mathcal C_\delta    \tau^{\delta-1} 
 \int_0 ^{\infty}   \lambda^{2+\delta} dF_v(\lambda),
 \end{align*}
 Implying the convergence of the integral  in \eqref{e:geman}, uniformely in $v$. 

\end{proof}


Next, we consider a Gaussian field having $C^{\infty}$ sample paths. This is for instance the case of Gaussian random trigonometric polynomials in several variables or  the random plane wave model \cite{Z}. A result of Nualart \& Wschebor, quoted as Theorem 3.6 in the book  \cite{AW}, can be used for obtaining that all the moments of the random variable $\mathcal{I}_{d-1}(C_{u,K})$ are finite. The background result is the following:

\begin{proposition} \label{NW}  Consider a Gaussian process $\chi , \R \to \R$ satisfying  $\Var(\chi(t))>\kappa$ for all $t\in I$ a compact interval of $\R$ and some $\kappa >0$. Then for all $u\in\R$, and $m,p\in\N$ such that $p>2m $, it holds 
  
\begin{equation}
\E[(N_u)^m] \leq C_{p,m}\big[ 1+C+\E\big( \| X^{(p+1)}\|
_{\infty }\big) \big]   \label{dnmw}
\end{equation}
where  $N_u $ is the number of points $t \in I$ such that $\chi(t)  =u$, $C_{p,m}$ is a constant depending only on $p,m$ and the length of the interval $I$, and $C$ is a bound for the density of $\chi(t)$.
\end{proposition}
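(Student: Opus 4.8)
The plan is to reduce the multidimensional-looking problem to a purely one-dimensional pathwise counting estimate on a fine partition, and then to separate ``generic'' from ``clustered'' crossing configurations. First I would fix the compact interval $I$ and, for a mesh $\delta>0$, split it into $n\sim |I|/\delta$ consecutive subintervals $I_1,\dots,I_n$ of length $\delta$. Let $A_k$ be the event that $\chi-u$ changes sign on $I_k$ and set $N^\delta:=\sum_k \ind_{A_k}$. Because $\Var(\chi(t))>\kappa>0$, Bulinskaya's lemma guarantees that a.s.\ there is no $t$ with $\chi(t)=u$ and $\chi'(t)=0$; hence every crossing is transversal and isolated, so for $\delta$ small each $I_k$ contains at most one crossing and $N^\delta\uparrow N_u$ a.s. By monotone convergence it then suffices to bound $\E[(N^\delta)^m]$ \emph{uniformly in $\delta$}.

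Next I would expand $\E[(N^\delta)^m]=\sum_{k_1,\dots,k_m}\P(A_{k_1}\cap\cdots\cap A_{k_m})$ and regroup the terms according to the number $r\le m$ of distinct indices and their mutual distances, splitting the sum into well-separated configurations and clustered ones. For well-separated indices the joint law of the values of $\chi$ at the relevant points is non-degenerate with density bounded by the constant $C$; estimating the probability of a crossing in an interval of length $\delta$ by $C\delta$ and, for $r$ separated intervals, $\P(A_{k_1}\cap\cdots\cap A_{k_r})\le C^r\delta^r$, and summing over the $\binom{n}{r}$ configurations yields a contribution bounded by $C_{p,m}(1+C)$ independently of $\delta$. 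This is the source of the ``$1+C$'' in the stated bound.

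The remaining clustered configurations, where several of the $I_{k_i}$ lie inside a window of length $O(\delta)$, are where the joint density of $(\chi(t_1),\dots,\chi(t_r))$ degenerates, with singularity of order $\prod_{i<j}|t_i-t_j|^{-1}$. To control them I would Taylor-expand $\chi-u$ to order $p$ on the window: $r$ sign changes force at least $r$ zeros, and by iterated Rolle together with the divided-difference identity $\chi[s_0,\dots,s_{p+1}]=\chi^{(p+1)}(\xi)/(p+1)!$ the roots in excess of the at most $p$ roots of the Taylor polynomial must be ``paid for'' by the remainder, i.e.\ by $\|\chi^{(p+1)}\|_\infty$. After integration each clustered contribution is bounded by a positive power of $\delta$ times $\E\|\chi^{(p+1)}\|_\infty$ (higher moments of the Gaussian supremum being reducible to the first power by Gaussian concentration). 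Balancing the extra powers of $\delta$ supplied by the Taylor remainder against the density singularity and the number of clustered configurations is exactly what requires the Taylor order to exceed twice the multiplicity, i.e.\ $p>2m$, and produces the final bound $\E[(N^\delta)^m]\le C_{p,m}\big[1+C+\E\|\chi^{(p+1)}\|_\infty\big]$.

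The main obstacle is precisely this clustered/near-diagonal estimate: one must combine quantitatively the deterministic Taylor--Rolle control of multiple zeros inside a short window with the exact rate of degeneration of the Gaussian joint density, so that (a) only the first power of $\E\|\chi^{(p+1)}\|_\infty$ survives and (b) the powers of $\delta$ balance against the $\prod_{i<j}|t_i-t_j|^{-1}$ singularity uniformly in $\delta$. The threshold $p>2m$ is exactly the condition under which this bookkeeping closes; the discretisation, the transversality step, and the well-separated terms are all routine by comparison.
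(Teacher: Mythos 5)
First, a point of context: the paper itself does not prove this proposition; it quotes it verbatim as a known result (Theorem 3.6 of \cite{AW}, due to Nualart and Wschebor), so your attempt has to be measured against that known argument. Measured this way, your proposal has a genuine gap: the estimate you use for the \emph{well-separated} configurations is false under the stated hypotheses. The only probabilistic input the proposition grants is a bound $C$ on the \emph{one-dimensional} densities of $\chi(t)$ (this is what $\Var(\chi(t))>\kappa$ buys); nothing whatsoever is assumed about joint laws at distinct points. Your bound $\P(A_{k_1}\cap\dots\cap A_{k_r})\le C^r\delta^r$ fails on both counts: (i) for $\chi(t)=\xi\cos t+\eta\sin t$ (variance $1$ everywhere) the vector of values at any $r\ge 3$ points is degenerate, so no bounded joint density exists however separated the points are; (ii) even for $r=1$, the probability of a sign change on an interval of length $\delta$ is governed by the derivative scale of the process, not by $C$. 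Indeed, already for $m=1$ your decomposition would give $\E[(N^\delta)]=\sum_k\P(A_k)\le \mathbf C\,C\,|I|$ uniformly in $\delta$, hence $\E[N_u]\le \mathbf C\,C\,|I|$; this contradicts Rice's formula, since for $\chi(t)=\xi\cos(\omega t)+\eta\sin(\omega t)$ one has $\E[N_u]=\frac{|I|}{\pi}\,\omega\,e^{-u^2/2}\to\infty$ as $\omega\to\infty$ while $C=1/\sqrt{2\pi}$ stays fixed. Two auxiliary steps are also unjustified: Bulinskaya's lemma requires a bounded \emph{joint} density of $(\chi(t),\chi'(t))$, which does not follow from $\Var(\chi(t))>\kappa$ (the correct tool for ``a.s.\ no tangency at level $u$'' is Ylvisaker's theorem on local extrema); and Gaussian concentration gives $\E[Y^q]\le \mathbf C_q(\E Y)^q$ for $Y=\|\chi^{(p+1)}\|_\infty$, not the linear reduction $\E[Y^q]\le \mathbf C\,\E Y$ that you invoke.

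The known proof avoids multi-point distributions altogether, which is exactly why only the one-point bound $C$ appears in the statement; your Taylor--Rolle intuition for clusters is the right mechanism, but it must carry the \emph{entire} argument, not only the near-diagonal part. One bounds the tail $\P(N_u\ge j)$ directly: partition $I$ into $n_j\approx j/(p+1)$ equal intervals; if $N_u\ge j$, pigeonhole gives one interval $J_k$, of length $h_j\approx (p+1)|I|/j$ and midpoint $t_k$, containing at least $p+1$ distinct zeros, and iterated Rolle yields $|\chi(t_k)-u|\le h_j^{p+1}\|\chi^{(p+1)}\|_\infty$. Truncating $Y:=\|\chi^{(p+1)}\|_\infty$ at level $j^m$,
\[
\P(N_u\ge j)\ \le\ \P(Y>j^m)\ +\ \sum_{k=1}^{n_j}\P\bigl(|\chi(t_k)-u|\le h_j^{p+1}j^m\bigr)\ \le\ \P(Y>j^m)\ +\ \mathbf C\,C\,j^{m-p},
\]
using only the one-dimensional density bound. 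Plugging this into $\E[N_u^m]\le \mathbf C_m\sum_{j\ge 1}j^{m-1}\P(N_u\ge j)$, the density part is $\mathbf C\,C\sum_j j^{2m-1-p}$, finite exactly when $p>2m$, while the tail part is $\sum_j j^{m-1}\P(Y>j^m)\le\E\bigl[\sum_{j\le Y^{1/m}}j^{m-1}\bigr]\le \mathbf C\,\E[Y]$, which is where the single power of $\E\|\chi^{(p+1)}\|_\infty$ comes from. Note that this route needs no discretisation, no transversality statement, and no separated-versus-clustered bookkeeping: the pigeonhole localises the whole event to one short interval and a single one-dimensional density evaluation, and the threshold $p>2m$ appears as the compatibility condition between the truncation level $j^m$ (forced by linearity in $\E Y$) and the summability of $j^{2m-1-p}$.
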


Let us  assume now that the field $X$ has $C^\infty$ sample paths  and we assume that the variance is bounded  below. As in the proof on Corollary \ref{C1} the process $\tilde X_{y,v}(t)=X(y+tv)$ is a real process but now with $C^\infty$ trajectories. Chose $p = 2m +1$ then from Proposition~\ref{NW}, for every $m$,  
$$ 
\E( \#   \left\{ C_{u,K}\cap \ell_{v,y} \right\})^m<  C_{p,m}\big[ 1+C+\E\big( \| X^{(2m+2)}_{y,v}\|_{\infty }\big) \big] .$$
It is an easy consequence of the Borel-Sudakov-Tsirelson  inequality that $\E\big( \| X^{(2m+2)}_{y,v}\|_{\infty }\big) $ is finite. An argument of  continuity shows that  it is uniformly bounded. A further application of Jensen's inequality gives our third main result 

\begin{theorem} \label{t:3}
Let $ X(t)$ a Gaussian random field  $\R^d   \to \R$   with $C^{\infty}$ sample paths and with variance bounded below. Then for every integer $m$  and every compact set $K$,
$$ \E(\mathcal{I}_{d-1}(C_{u,K}) )^m<\infty.$$
\end{theorem}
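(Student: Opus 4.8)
The plan is to mimic, step for step, the proof of Theorem~\ref{C1}: I would represent $\mathcal{I}_{d-1}(C_{u,K})$ through the one-dimensional counting functions of the restricted processes $\tilde X_{y,v}(t)=X(y+tv)$, use Jensen's inequality to push the $m$-th power inside the integral over the sphere and the hyperplane $v^\perp$, and then bound the resulting one-dimensional moments \emph{uniformly} in $(v,y)$ by means of Proposition~\ref{NW}. The only genuinely new ingredient compared with the second-moment case is the uniform control of $\E(\|X^{(2m+2)}_{y,v}\|_\infty)$, which is where the $C^\infty$ hypothesis and the Borel--Sudakov--Tsirelson inequality enter; I expect this to be the main obstacle.

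First I would reduce to a finite measure space. Since $K$ is compact, say $K\subseteq \overline{B_R}$, the inner counting function $\#\{C_{u,K}\cap\ell_{v,y}\}$ vanishes whenever the line misses $K$, so in \eqref{eq:igm} the variable $y$ effectively ranges over a bounded subset of $v^\perp$ and, together with the compactness of $S^{d-1}$, the domain of integration carries a finite total mass $M$. Writing $f(v,y)=\#\{C_{u,K}\cap\ell_{v,y}\}\ge 0$ and applying Jensen's inequality for $x\mapsto x^m$ against the normalized measure, followed by Tonelli to exchange expectation and integration (all terms being non-negative), I obtain
\begin{equation*}
\E(\mathcal{I}_{d-1}(C_{u,K}))^m \le \mathbf{C}\int_{v\in S^{d-1}}\int_{y\in v^\perp}\E\big(\#\{C_{u,K}\cap\ell_{v,y}\}\big)^m\,d\mathcal{H}_{d-1}(y)\,dS^{d-1}(v),
\end{equation*}
so that it suffices to bound the integrand uniformly in $(v,y)$.

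For fixed $(v,y)$ the quantity $\#\{C_{u,K}\cap\ell_{v,y}\}$ is the number of solutions of $\tilde X_{y,v}(t)=u$ for $t$ in the interval $\{t:y+tv\in K\}$, which (as $y\perp v$) is contained in the fixed interval $J=[-R,R]$; enlarging the count to all of $J$ only increases it. The process $\tilde X_{y,v}$ is Gaussian with $C^\infty$ paths, and its variance is bounded below by the hypothesis on $X$, so its one-dimensional density is bounded above by $1/\sqrt{2\pi\kappa}$, independently of $(v,y)$. I would therefore apply Proposition~\ref{NW} with $p=2m+1$ (so that $p>2m$), obtaining
\begin{equation*}
\E\big(\#\{C_{u,K}\cap\ell_{v,y}\}\big)^m \le C_{p,m}\big[1+C+\E(\|X^{(2m+2)}_{y,v}\|_{\infty})\big],
\end{equation*}
where $C_{p,m}$ (depending only on $p,m$ and the length of $J$) and $C$ are now uniform in $(v,y)$.

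The decisive last step is the uniform control of $\E(\|X^{(2m+2)}_{y,v}\|_\infty)$, which I expect to be the hard part. Here I would use the chain rule: $X^{(k)}_{y,v}(t)$ is a linear combination, with coefficients that are monomials in the (bounded) coordinates of $v$, of the order-$k$ partial derivatives of $X$ evaluated at $y+tv$. Hence the scalar field $(v,y,t)\mapsto X^{(2m+2)}_{y,v}(t)$ is, up to its smooth mean, a centered Gaussian field indexed by the compact parameter set $S^{d-1}\times\overline{B_R}\times J$, on which it has a.s.\ continuous—hence a.s.\ bounded—sample paths, because $X\in C^\infty$. The Borel--Sudakov--Tsirelson inequality then gives finiteness of the expected global supremum, and since
\begin{equation*}
\E(\|X^{(2m+2)}_{y,v}\|_\infty)\le \E\Big(\sup_{(v',y',t)\in S^{d-1}\times\overline{B_R}\times J}|X^{(2m+2)}_{y',v'}(t)|\Big)<\infty
\end{equation*}
uniformly in $(v,y)$, the integrand above is uniformly bounded. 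Plugging this into the Jensen bound and using the finiteness of $M$ yields $\E(\mathcal{I}_{d-1}(C_{u,K}))^m<\infty$. The delicate points I anticipate are the justification that this joint field is a.s.\ bounded on the compact parameter set (a Kolmogorov--Chentsov-type continuity argument from the $C^\infty$ regularity) and the bookkeeping ensuring that the constants in Proposition~\ref{NW} do not degenerate as $v$ ranges over $S^{d-1}$.
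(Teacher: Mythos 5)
Your proposal is correct and follows essentially the same route as the paper: Jensen's inequality applied to the Crofton-type representation \eqref{eq:igm}, Proposition~\ref{NW} with $p=2m+1$ applied to the line processes $\tilde X_{y,v}$, and the Borell--Sudakov--Tsirelson inequality to control $\E\big(\|X^{(2m+2)}_{y,v}\|_\infty\big)$. If anything, your handling of the uniformity in $(v,y)$ --- viewing $X^{(2m+2)}_{y,v}(t)$ via the chain rule as a Gaussian field over the compact parameter set $S^{d-1}\times\overline{B_R}\times J$ and bounding all the suprema by a single global one --- spells out what the paper compresses into ``an argument of continuity shows that it is uniformly bounded.''
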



\noindent
\textbf{Acknowledgements:}
This work has been partially supported by the French National
Research Agency (ANR) through project PEPITO
(no ANR-14-CE23-0011).  \\
David Ginsbourger would like to thank Andrew Stuart for a  stimulating discussion in Cambridge that has incented the present collaboration, and in turn to thank the Isaac Newton Institute for Mathematical Sciences, Cambridge, for support and hospitality during the programme ``Uncertainty quantification for complex systems: theory and methodologies'' (supported by EPSRC grant no EP/K032208/1).  

\begin{bibdiv}
  \begin{biblist}
 
 \bib{at}{book}{
   author={Adler, Robert},
   author={Taylor, Jonathan E.},
   title={Random fields and geometry.},
   publisher={Springer, New York, NY},
   date={2007},
   pages={xviii+448},
   isbn={978-0-387-48112-8},
   review={\MR{2319516}},
   }

\bib{AW}{book}{
   author={Aza\"\i s, Jean-Marc},
   author={Wschebor, Mario},
   title={Level sets and extrema of random processes and fields},
   publisher={John Wiley \& Sons, Inc., Hoboken, NJ},
   date={2009},
   pages={xii+393},
   isbn={978-0-470-40933-6},
   review={\MR{2478201}},
}
\bib{CL}{book}{
   author={Cram\'er, Harald},
   author={Leadbetter, M. R.},
   title={Stationary and related stochastic processes},
   note={Sample function properties and their applications;
   Reprint of the 1967 original},
   publisher={Dover Publications, Inc., Mineola, NY},
   date={2004},
   pages={xiv+348},
   isbn={0-486-43827-9},
   review={\MR{2108670}},
}

\bib{Fe}{book}{
   author={Federer, Herbert},
   title={Geometric measure theory.},
   publisher={Springer-Verlag New York Inc., NY},
   date={1969},
   pages={xiv+676 },
   review={\MR{0257325}},
}

\bib{Ge}{article}{
   author={Geman Donald},
   title={
On the Variance of the Number of Zeros of a Stationary Gaussian Process},
   journal={Ann. of Math. Stat.},
   date={1972},
   pages={Volume 43-2, pp. 977-982},
   review={\MR{0301791}},
}


\bib{Morgan}{book}{
   author={Morgan, Frank},
   title={Geometric measure theory},
   edition={5},
   note={A beginner's guide;
   Illustrated by James F. Bredt},
   publisher={Elsevier/Academic Press, Amsterdam},
   date={2016},
   pages={viii+263},
   isbn={978-0-12-804489-6},
   review={\MR{3497381}},
}
\bib{W}{book}{
   author={Wschebor, Mario},
   title={Surfaces Aleatoires},
   publisher={Lectures Notes in Mathematics 1147, Springer Verlag, Berlin},
   date={1985},
   pages={xiii+111 pp},
   isbn={3-540-15688-7},
   review={\MR{0871689}},
}

\bib{Z}{article}{
author={Zelditch, Steve}, 
title={Real and complex zeros of Riemannian random waves},
journal={Contemporary Mathematics},
date={2009},
pages={484-14, 321-344.}
review={\MR{1500155}},
}

\end{biblist}
\end{bibdiv}

\end{document}